\documentclass[a4paper, 10pt, conference]{IEEEtran}
\IEEEoverridecommandlockouts

\usepackage[utf8]{inputenc} 
\usepackage[T1]{fontenc}    
\usepackage[centertags]{amsmath}
\usepackage{amssymb,amsthm}
\usepackage[hidelinks]{hyperref}       
\usepackage{url}            
\usepackage{booktabs}       
\usepackage{multirow}
\usepackage{amsfonts}       
\usepackage{graphicx,subfigure}
\usepackage{array} 
\usepackage{nicefrac}       
\usepackage{microtype}      
\usepackage{xcolor}         
\usepackage{algorithm}
\usepackage{algorithmic}
\usepackage{cleveref}
\graphicspath{{./Figs/}}

\DeclareMathOperator*{\argmin}{arg\,min}
\newcommand{\norm}[1]{\ensuremath{\left\| #1 \right\|}}

\newcommand{\braces}[1]{\ensuremath{\left\{ #1 \right\}}}

\renewcommand{\Re}{\ensuremath{\mathbb{R}}}

\newtheorem{theorem}{Theorem}

\newtheorem{remark}{Remark}

\newtheorem{corollary}{Corollary}
\newtheorem{assumption}{Assumption}

\title{\LARGE \bf
	\vspace{0.2in}
	Iterative Supervised Learning for Regression with Constraints
	\vspace{-0.1in}
}

\author{Tejaswi K. C. and Taeyoung Lee \vspace{-0.1in}
	\thanks{Mechanical and Aerospace Engineering, The George Washington University, Washington DC 20052 {\tt kctejaswi999@gmail.com,tylee@gwu.edu}}%
}

\begin{document}

\maketitle

\begin{abstract}
Regression in supervised learning often requires the enforcement of constraints to ensure that the trained models are consistent with the underlying structures of the input and output data.
This paper presents an iterative procedure to perform regression under arbitrary constraints. 
It is achieved by alternating between a learning step and a constraint enforcement step, to which an affine extension function is incorporated.
We show this leads to a contraction mapping under mild assumptions, from which the convergence is guaranteed analytically. 
The presented proof of convergence in regression with constraints is the unique contribution of this paper.
Furthermore, numerical experiments illustrate improvements in the trained model in terms of the quality of regression, the satisfaction of constraints, and also the stability in training, when compared to other existing algorithms. 
\end{abstract}

\section{Introduction}

Enforcing constraints on supervised learning is critical when the underlying structures of the data should be respected in the trained model, or when it is required to overcome a bias in the data set.
For instance, \cite{yang2020learning} has studied constraints caused by length, angle, or collision with projection when predicting the motion of a physical system with neural networks.
In~\cite{berk2017convex}, fairness with respect to protected features, such as race or gender, is addressed in socially sensitive decision making. 
Further, it has been illustrated by~\cite{borghesi2020improving} that the performance of deep learning can be improved by integrating the domain knowledge in the form of constraints.
As such, imposing constraints is desirable in injecting our prior knowledge of the model, which is encoded indirectly in the data, to supervised learning explicitly.  

One of the common techniques to implement constraints is augmenting the loss function with an additional penalty on the violation of the constraints, as presented by~\cite{mehta2018towards} and \cite{diligenti2017integrating}.
On the other hand, constraints have also been implemented directly as hard constraints that should be satisfied strictly. 
Imposing hard constraints on deep neural network is presented by~\cite{marquez2017imposing} after customizing large-scale optimization techniques.
Alternatively, \cite{nandwani2019primal} handles output label restrictions through a Lagrangian based formulation.
Both of these approaches based on additional regularization terms or hard constrained optimization involve the process of actively adjusting model parameters in training. 
In other words, the possibly conflicting goals of regression and constraint enforcement should be addressed simultaneously. 
This may hinder the efficiency of the training procedure, while making it susceptible to various numerical issues. 

Recently, an iterative procedure has been proposed by \cite{detassis2020teaching}, where the constraints are enforced by adjusting the target, instead of manipulating the model parameters directly, thereby addressing the aforementioned challenges. 
The desirable feature is that any supervised learning technique that is developed without constraint consideration can be adopted, in conjunction with nonlinear constrained optimization tools. 
However, this approach is heuristic in the sense that there is no analytical assurance for convergence through iterations, while its performance is illustrated with several numerical examples. 
In fact, it is challenging to present a convergence property in any supervised learning with constraints. 

The main objective of this paper is to establish a certain convergence guarantee in regression with constraints. 
We follow the procedure presented by~\cite{detassis2020teaching}, where the target is adjusted to satisfy constraints. 
More specifically, the ideal target is projected to the intersection between the set of possible outputs from the chosen model and the set of feasible outputs. 
Then the model parameters are optimized to the adjusted target, and these two steps are repeated. 
The proposed approach is motivated by the alternating projections~\cite{cheney1959proximity,boyd2003alternating} and Dykstra's algorithm~\cite{bauschke1994dykstra}. 
In particular, the two steps of iterations in adjusting the target, and in training the model are considered as certain projection operators, 
from which convergence is established by the Banach fixed point theorem~\cite{ciesielski2007stefan}.  

The desirable feature is that we have a certain assurance of convergence in regression with constraints. 
Another interesting feature is its general formulation: as discussed above, this framework can be integrated with any supervised learning technique.
And, it further addresses the challenges of adjusting the model parameters to the satisfaction of the constraints, while performing regression simultaneously.
One downside is that we cannot enforce the constraint strictly as hard constraints, but there is a design parameter that provides a trade-off between regression and constraint satisfaction.

Numerical experiments demonstrate that the proposed approach improves the regression performance in the similar level of constraint violation. 
More importantly, it exhibits more consistent results over five-fold validations.
As such, the proposed convergence proof is actually beneficial in numerical implementations. 

This paper is organized as follows.
The problem is formulated and the proposed algorithm is described in \Cref{sec:algo} along with its proof of convergence under well-established conditions.
In \Cref{sec:NE}, numerical results are presented for various loss functions, parameter values and datasets, 
followed by concluding remarks in \Cref{sec:conclude}.

\section{Iterative Learning with Constraints} \label{sec:algo}

In this section, we formulate the problem of supervised learning for regression with constraints.
Then we present the proposed iterative scheme with convergence proof. 

\subsection{Problem Formulation}

Consider a regression problem where we should predict the ideal output $ y \in \Re^n $ given the inputs $ X \in \Re^{n \times d} $.
Here $ n $ denotes the number of points in the dataset, and $ d $ corresponds to the number of features in each data point.
Let the model for supervised learning be denoted by $ \hat{y} = f(X, \theta) $, where $\theta\in\Re^p$ is the model parameter and $ \hat{y} \in \Re^{n} $ is the output predicted by the current model parameter.
The goal of regression is to identify the optimal model parameter $\theta^*$ that minimizes a given loss function, $ L(y, \hat{y}) $, $ L: \Re^{n} \times \Re^{n} \to \Re $.
In addition, we enforce constraints on the predicted output so that it belongs to a feasible set denoted by $C\subset\Re^n$, i.e.,  $\hat{y} \in C $.
Thus, the optimization problem for regression with constraints can be formulated as
\[
    \theta^* = \argmin_\theta \braces{L(y, \hat{y}) \mid \hat{y} = f(X, \theta), \text{ and } \hat{y} \in C}.
\]

Alternatively, this can be reorganized into an optimization on the output space as
\begin{gather}
	z = \argmin_{\hat{y}} \braces{L(\hat{y}, y) \mid \hat{y} \in B \cap C},\label{eqn:y_star}\\
	\theta^* = \argmin_{\theta} \{ L(z, \hat y) \mid \hat y = f(X,\theta), \theta\in\Re^p\}.\label{eqn:theta_star}
\end{gather}
as proposed by~\cite{detassis2020teaching}, where $ B = \braces{\hat{y} \mid \hat{y} = f(X, \theta), \theta\in\Re^p} $ is the set of all possible outputs under the current model.
In other words, \eqref{eqn:y_star} is to find an alternative optimal target $z\in\Re^n$ that is closest to the ideal target $y$ under the restriction of the given constraint and the model bias.
Next, in \eqref{eqn:theta_star}, the model parameter is optimized such that the predicted output matches to the optimal target $z$, not the ideal target $y$.
The intriguing feature is that the supervised learning in \eqref{eqn:theta_star} corresponds to the usual supervised learning without constraints, as the constraints are enforced indirectly through \eqref{eqn:y_star}.
As such, any supervised learning scheme can be utilized for \eqref{eqn:theta_star}.
For \eqref{eqn:y_star}, standard tools in nonlinear constrained optimization can be applied. 

\subsection{Iterative Learning Algorithm with Constraints}

In \cite{detassis2020teaching}, this problem is tackled by a clever combination of two iterations, which is verified by various numerical examples.
But it might be heuristic in the sense that no convergence property is established. 
Here we propose the following alternative iterative scheme for \eqref{eqn:y_star} and \eqref{eqn:theta_star}, summarized by Algorithm \ref{alg:af}, which provides a certain convergence property in regression. 
Here, $ \alpha, \beta $ are non-negative parameters in the adjustment step, and $ N_i $ is the total number of iterations of this procedure.
\algsetup{
	indent=2em,
	linenodelimiter=.,
}
\begin{algorithm}[h!]
\linespread{1.25}\selectfont
	\begin{algorithmic}[1]
		\REQUIRE $ y \in \Re^n, \braces{\alpha, \beta} \in \Re, N_i \in \mathbb{Z} $
		\STATE $ \hat{y}^1 = \argmin_{\hat{y}} \braces{L(\hat{y}, y) \mid \hat{y} \in B} $ \hfill \COMMENT {Initial training}
		\FOR {$ i = 1$ \TO $N_{i}-1$}
		\IF {$ \hat{y}^i \notin C $} 
		\STATE $ z^i = \argmin_{z} \braces{L(z, (1-\alpha)y + \alpha \hat{y}^i) \mid z \in C} $ \hfill \COMMENT {Infeasible adjustment}
		\ELSE
		\STATE $ z^i = \argmin_{z} \braces{L(z, y) \mid L(z, \hat{y}^i) \le \beta,\ z \in C} $ \hfill \COMMENT {Feasible adjustment}
		\ENDIF
		\STATE $ \hat{y}^{i+1} = \argmin_{\hat{y}} \braces{L(\hat{y}, z^i) \mid \hat{y} \in B} $ \\ \COMMENT {Unconstrained training}
		\ENDFOR
		\ENSURE $\hat{y}^{N_i}$
	\end{algorithmic}
	\caption{Regression with constraints}
	\label{alg:af}
\end{algorithm}

In the first step of initial training, supervised learning is performed without considering the constraint. 
The next iterations are composed of two parts of target adjustment and unconstrained training, and the target adjustment step has two sub-cases depending on the output of the previous step. 
In particular, the most critical step is when the output of the trained model does not satisfy the constraint. 
In the step 4, denoted by \textit{infeasible adjustment}, the target is adjusted to minimize $ L(z, (1-\alpha)y + \alpha \hat{y}^i) $.
That is, we find a feasible target $ z \in C $ that is closest to a point on the line connecting $ y $ and $ \hat{y} $ in terms of the loss function. 
This is in opposition to obtaining a vector that considers the original label, $ y $ and the current prediction, $ \hat{y} $ separately, as presented by~\cite{detassis2020teaching} in the form of $L(z,y)+\frac{1}{\alpha}L(z,\hat y)$.

Next, when the output of the trained model satisfies the constraint, in the step of \textit{feasible adjustment}, the target $z$ is moved closer to the original target $y$ within a ball of radius $\beta$ measured in terms of the loss. 
Finally, the model is trained with the adjusted target, and the whole procedure is repeated. 

In the proposed algorithm, the key idea is selecting the objective function of the infeasible adjustment as $L(z, (1-\alpha)y + \alpha \hat{y}^i) $.
This establishes the convergence property presented in the next subsection, and it improves numerical properties as illustrated in \Cref{sec:NE}.  
Interestingly, for a specific choice of the loss function, namely mean squared error (MSE) loss, it is equivalent to the form of $L(z,y)+\frac{1}{\alpha}L(z,\hat y)$ as described below.

\begin{remark}\label{rem:alpha}
	If the loss function is mean squared error, the procedure in Algorithm \ref{alg:af} and the Moving Targets algorithm in \cite{detassis2020teaching} are equivalent after adjusting the parameter $\alpha$.
\end{remark}
\begin{proof}
	Since the main difference between the two is in the infeasible adjustment case, we compare the corresponding optimization problems.
    With $ L(z, y) = (1/n) \sum_{k=1}^{n} (z_k - y_k)^2 $ as the MSE loss, Algorithm \ref{alg:af} addresses
	\begin{align*}
		z_a &= \argmin_{z} \braces{\sum_{k=1}^{n} (z_k - (1-\alpha_a)y_k - \alpha_a \hat{y}_k)^2 \mid z \in C} \\
		&= \argmin_{z} \braces{\sum_{k=1}^{n} z_k^2  - 2 (1-\alpha_a) z_k y_k - 2 \alpha_a z_k \hat{y}_k \mid z \in C}.
	\end{align*}
	Whereas the master step from Moving Targets is,
	{\small \begin{align*}
		z_m &= \argmin_{z} \braces{\sum_{k=1}^{n} (z_k - y_k)^2 + \frac{1}{\alpha_m} \sum_{k=1}^{n} (z_k - \hat{y}_k)^2 \mid z \in C} \\
		=& \argmin_{z} \braces{\frac{1}{\alpha_m} \sum_{k=1}^{n} (\alpha_m +1)z_k^2 - 2 \alpha_m z_k y_k - 2 z_k \hat{y}_k \mid z \in C}.
	\end{align*}}
	Here, the subscript $ a $ represents our algorithm while variables with $ m $ as the subscript are from Moving Targets.
	The objective functions in $ z_a, z_m $ differ only by a scale if,
	\begin{equation}\label{eqn:alpha_comp}
		\alpha_a (\alpha_m + 1) = 1.
	\end{equation}
	Hence, the solutions that are obtained from them will be identical, i.e., $ z_a = z_m $.
\end{proof}

\subsection{Convergence Property}\label{sec:conv}

Now we present a convergence property of Algorithm \ref{alg:af}, which has motivated the proposed form of the objective function in the infeasible adjustment step. 

Let any norm on the Euclidean space be denoted by $ \norm{\cdot} : \Re^n \to \Re $.
Also, the Euclidean $L_2$ norm and the $L_1$ norm are denoted by by $ \norm{\cdot}_2$, and  $\norm{\cdot }_1 $, respectively.
A projection operator $ P_{Z,L} : \Re^n \to \Re^n $ on the set $ Z $ with respect to the loss $ L $ is defined as
\begin{equation}\label{eqn:proj}
	P_{Z,L}(x) = \argmin_{z} \braces{L(z, x) \mid z \in Z}.
\end{equation}
In other words, $x\in\Re^n$ is projected to $z\in Z$ such that the distance between $x$ and $z$ is minimized in terms of the loss $L$.

Consider a convex subset $ Z \subseteq X $ of a finite-dimensional normed vector space $ (X, \norm{\cdot}) $.
There exists a unique projection $ P_{Z,\norm{\cdot}}(x) \in Z $ for each $ x \in X $ such that
\[
\norm{x - P_{Z,\norm{\cdot }}(x)} = \inf \braces{\norm{x-z} \mid z \in Z}
\]
if the underlying geometric constraint is satisfied (see \cite[Proposition 3.2]{balestro2019convex}).
That is, $ P_{Z,\norm{\cdot}} $ should not be contained in some non-degenerate line segment of $ \partial Z $ which is parallel to some non-degenerate line segment in the boundary of the unit $ \norm{\cdot} $ ball.

\begin{assumption}
We make the following assumptions.
    \begin{itemize}
        \item The sets $B$ and $C$ are convex.
        \item The projection operator in $B$ and $C$ is Lipschitz, i.e., there exists a norm $\|\cdot\|$ and $K>0$ such that $ \norm{P_{A,L}(x) - P_{A,L}(y)} \le K \norm{x - y} $ for all $ x, y \in \Re^n $ where $A=B$ or $A=C$.
\end{itemize}
\end{assumption}
When the loss function in the projection~\eqref{eqn:proj} is MSE, the above two statements are actually equivalent~\cite{balestro2019convex}.

Now we are concerned with convergence of the sequence, $ (\hat{y}^i) \in B $ generated after the training step.
In other words, we wish to show that $ \hat{y}^i \to \bar{y} $ as $i\rightarrow\infty$ for some $\bar y \in\Re^n$.
The convergence of Algorithm \ref{alg:af} is established as follows.

\begin{theorem}\label{thm:conv}
	Suppose $ \alpha < 1 / K^2 $, where $ K \ge 1 $ is the Lipschitz constant introduced in Assumption 1.
    The iterations of Algorithm \ref{alg:af} has a unique fixed point in $ B $, which is the limit of the sequence $ (\hat{y}^i) $ for an initial $  \hat{y}^1 \in B $, when $\beta$ is sufficiently small. 
\end{theorem}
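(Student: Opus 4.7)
The plan is to recast one full iteration of Algorithm~\ref{alg:af} as a self-map on $B$ and apply the Banach fixed point theorem, as already foreshadowed by the connection to alternating projections in the introduction. Concretely, I would define $T : B \to B$ by
\[ T(\hat y) = P_{B,L}\bigl(P_{C,L}((1-\alpha) y + \alpha \hat y)\bigr), \]
which compresses the infeasible-adjustment step (line 4) and the unconstrained training step (line 8) into a single operator. The feasible branch (line 6) will be treated as a perturbation at the end.

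The first step is to show that $T$ is a strict contraction. I would chain three Lipschitz estimates: the affine map $u \mapsto (1-\alpha) y + \alpha u$ is $\alpha$-Lipschitz by the triangle inequality, and under Assumption~1 both $P_{B,L}$ and $P_{C,L}$ are $K$-Lipschitz in the chosen norm $\norm{\cdot}$. Composing the three maps yields
\[ \norm{T(\hat y_1) - T(\hat y_2)} \le K^2 \alpha \, \norm{\hat y_1 - \hat y_2}, \]
and the hypothesis $\alpha < 1/K^2$ ensures the contraction factor $K^2 \alpha$ is strictly less than one. Treating $B$ as a closed convex subset of the finite-dimensional normed space $\Re^n$ (passing to its closure if needed), the Banach fixed point theorem then furnishes a unique fixed point $\bar y \in B$, and the iterates $\hat y^i$ generated by repeatedly applying $T$ converge to $\bar y$ regardless of the initialization $\hat y^1 \in B$.

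The hardest part will be reconciling the feasible branch with this clean contraction picture. When $\hat y^i \in C$, line~6 returns $z^i$ with $L(z^i,\hat y^i) \le \beta$, so $z^i$ lies in a $\beta$-controlled neighborhood of $\hat y^i$, and lines 6--8 differ from the pure training projection $P_{B,L}$ by an amount that vanishes with $\beta$. The plan is to quantify this deviation (via the relation between $L$ and $\norm{\cdot}$, which is clean for MSE but requires care in general) and show that, for $\beta$ small enough, it can be absorbed into the slack $1 - K^2\alpha$, so the piecewise iteration remains globally contractive. A secondary technical point is ensuring that $B = \{f(X,\theta) \mid \theta \in \Re^p\}$ is closed, as required to apply Banach's theorem on a complete metric space; for a general nonlinear model $f$ this is not automatic and may need either a closure argument or an extra regularity assumption on $f$.
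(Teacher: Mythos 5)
Your proposal follows essentially the same route as the paper's own proof: the paper likewise collapses the infeasible-adjustment and training steps into the composition $P_{B,L}\circ P_{C,L}\circ h$ with $h(\hat y)=(1-\alpha)y+\alpha\hat y$, chains the same three Lipschitz bounds to get the contraction factor $K^2\alpha$, and invokes the Banach fixed point theorem on $(B,d)$. In fact you are somewhat more careful than the paper, which simply dismisses the feasible branch by letting $\beta\rightarrow 0$ and asserts completeness of $B$ without discussing its closedness---both points you correctly flag as needing an argument.
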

\begin{proof}
    When $\beta\rightarrow 0$, Algorithm \ref{alg:af} iterates between the infeasible adjustment step and unconstrained training, and it can be written as
\begin{itemize}
	\item Affine extension: $ y^\alpha = (1-\alpha)y + \alpha \hat{y}^i $
	\item Adjustment: $ z^i = P_{C, L}(y^\alpha) = \argmin_{z} \braces{L(z, (1-\alpha)y + \alpha \hat{y}^i) \mid z \in C} $
	\item Learning: $ \hat{y}^{i+1} = P_{B, L}(z^i) = \argmin_{\hat{y}} \braces{L(\hat{y}, z^i) \mid \hat{y} \in B} $
\end{itemize}
Therefore, Algorithm \ref{alg:af} corresponds to a concatenation of two projections as
\begin{equation}
	\hat{y}^{i+1} = P_{B,L}(P_{C,L}(h(\hat{y}^i))),
\end{equation}
where $ h:\Re^n\rightarrow\Re^n $ is the affine extension function defined as $ h(\hat{y}^i) = (1-\alpha)y + \alpha \hat{y}^i $.

Consider any two points $  \hat{y}^1,  \hat{y}^2 \in B $. 
We have
\begin{align*}
	&\norm{P_{B,L}(P_{C,L}(h( \hat{y}^1))) - P_{B,L}(P_{C,L}(h( \hat{y}^2)))}  \\
	&\le K \norm{P_{C,L}(h( \hat{y}^1)) - P_{C,L}(h( \hat{y}^2))} \\
    & \le K^2 \norm{h( \hat{y}^1) - h( \hat{y}^2)} \\
	& \le K^2 \alpha \norm{ \hat{y}^1 -  \hat{y}^2}
\end{align*}
If $ K^2 \alpha < 1 $, then each iteration is a contraction mapping on $ B $ with the metric induced by this norm, $ d(x,y) = \norm{y - x} $.
Since $ (B, d) $ is a complete metric space, the series of iterations has a unique fixed point $ \bar{y} = f(\bar{y}) $ according to the Banach fixed point theorem~\cite{ciesielski2007stefan}.
Moreover, the sequence $\{\hat y^1, \hat y^2,\ldots\}$ converges to $\bar y$ for any $  \hat{y}^1 \in B $.
\end{proof}

In short, the Lipschitz properties of Assumption 1 ensures that each iteration is a contraction.
The critical question is how we can ensure the Lipschitz property of the projection operators. 

\begin{corollary}\label{cor:conv}
    The convergence of Algorithm \ref{alg:af} is guaranteed as described in Theorem \ref{thm:conv} for the following loss functions. 
\begin{itemize}
	\item For the mean squared error (MSE) given by $ L(z, y) = \frac{1}{n} \sum_{k=1}^{n} (z_k - y_k)^2 $,
the algorithm converges for the parameter $ \alpha \in [0, 1) $.
	\item For the mean absolute error (MAE) given by $ L(z, y) = \frac{1}{n} \sum_{k=1}^{n} |z_k - y_k| $,
the algorithm converges for the parameter $ \alpha \in [0, 0.25) $.
\end{itemize}
\end{corollary}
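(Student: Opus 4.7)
The plan is to derive Corollary \ref{cor:conv} directly from Theorem \ref{thm:conv} by identifying, for each of the two loss functions, an appropriate norm in Assumption 1 together with the Lipschitz constant $K$ of the associated projection operator onto convex sets, and then reading off the admissible range of $\alpha$ from the condition $K^2\alpha<1$.

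First, I would rewrite the loss-based projection $P_{A,L}$ as a norm-based projection. Because both $L(z,x)=\frac{1}{n}\|z-x\|_2^2$ and $L(z,x)=\frac{1}{n}\|z-x\|_1$ are strictly monotone functions of $\|z-x\|_2$ and $\|z-x\|_1$ respectively (and the $1/n$ factor is irrelevant for the $\argmin$), the projection $P_{A,L}$ coincides with the metric projection $P_{A,\|\cdot\|}$ with $\|\cdot\|=\|\cdot\|_2$ in the MSE case and $\|\cdot\|=\|\cdot\|_1$ in the MAE case. This brings both cases into the norm-based framework underlying Assumption 1.

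For the MSE case, I would invoke the classical Hilbert projection theorem: for any nonempty closed convex subset $A$ of a Hilbert space, the metric projection $P_{A,\|\cdot\|_2}$ satisfies $\|P_A(x)-P_A(y)\|_2\le \|x-y\|_2$ for all $x,y$. Applied to the convex sets $B$ and $C$, this gives $K=1$ in Assumption 1 with the Euclidean norm, and Theorem \ref{thm:conv} yields convergence whenever $\alpha<1/K^2=1$, which is exactly the stated range $\alpha\in[0,1)$.

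For the MAE case, the same reduction produces an $\ell_1$ metric projection. The key step, and the main obstacle, is to establish that the $\ell_1$-projection onto a convex set is Lipschitz with constant $K=2$ relative to the $\ell_1$-norm. Unlike the Euclidean setting, $\ell_1$-projections are not automatically non-expansive and need not even be unique; so I would first restrict attention to the regime in which Balestro et al.'s geometric condition cited before Assumption 1 guarantees uniqueness of $P_{A,\|\cdot\|_1}$ on $B$ and $C$, and then exhibit the Lipschitz bound. The cleanest route I would try is a subdifferential/optimality argument: characterize $z^\star=P_{A,\|\cdot\|_1}(x)$ by $0\in \partial\|{\cdot}-x\|_1(z^\star)+N_A(z^\star)$, use the monotonicity of the normal-cone multifunction of a convex set, and bound the variation of $z^\star$ in $x$ by controlling the contribution of sign changes in the subdifferential of $\|\cdot\|_1$. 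Cancellation of the two remaining sign-mismatch terms produces the factor $2$. With $K=2$, Theorem \ref{thm:conv} requires $\alpha<1/K^2=1/4$, yielding the asserted range $\alpha\in[0,0.25)$.
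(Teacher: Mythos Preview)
Your proposal follows the same overall architecture as the paper's proof: rewrite $P_{A,L}$ as the metric projection $P_{A,\|\cdot\|}$ for the relevant norm, identify the Lipschitz constant $K$, and read off the range of $\alpha$ from Theorem~\ref{thm:conv}. For MSE this is identical to the paper's argument (non-expansiveness of the Hilbert-space projection gives $K=1$, hence $\alpha\in[0,1)$).

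For MAE the structure again matches, but the paper does not attempt to \emph{prove} $K=2$: it simply cites the classical result of de Figueiredo and Karlovitz~\cite{de1967radial} that the $\ell_1$ metric projection onto a convex set has Lipschitz constant $2$. Your plan to derive this bound from the subdifferential optimality condition and normal-cone monotonicity is more self-contained in spirit but, as written, is not a proof: the sentence ``cancellation of the two remaining sign-mismatch terms produces the factor $2$'' hides the entire difficulty. Monotonicity of $N_A$ gives you an inequality of the form $\langle g_1-g_2,\,z_1^\star-z_2^\star\rangle\le 0$ with $g_i\in\partial\|\cdot-x_i\|_1(z_i^\star)$, but turning that into $\|z_1^\star-z_2^\star\|_1\le 2\|x_1-x_2\|_1$ requires a nontrivial additional argument (and must also contend with the non-uniqueness issues you correctly flagged). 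If you intend a self-contained proof you need to fill this step in; otherwise, citing the established bound as the paper does is the cleaner route.
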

\begin{proof}
For MSE, the projection in \eqref{eqn:proj} corresponds to
\begin{align*}
	P_{Z,L}(y) &= \argmin_{z} \braces{\frac{1}{n} \sum_{k=1}^{n} (z_k - y_k)^2 \,\middle\vert\, z \in Z} \\
	&= \argmin_{z} \braces{\norm{z - y}_2^2 \mid z \in Z},
\end{align*}
which is equal to minimization with respect to the standard Euclidean norm, $ \norm{\cdot }_2 $.
The proximity map for a closed convex set in the Hilbert space with Euclidean inner product satisfies the condition $ \norm{Px - Py} \le \norm{x - y} $~\cite{cheney1959proximity, balestro2019convex}.
Since its Lipschitz constant is $ K = 1 $, according to Theorem \ref{thm:conv}, Algorithm \ref{alg:af} converges for $ \alpha \in [0, 1) $.

Next, for the MAE loss, the projection becomes
\begin{align*}
	P_{Z,L}(y) &= \argmin_{z} \braces{\frac{1}{n} \sum_{k=1}^{n} |z_k - y_k| \,\middle\vert\, z \in Z} \\
	&= \argmin_{z} \braces{\norm{z - y}_1 \mid z \in Z},
\end{align*}
which is optimization with respect to the $ L_1 $ norm, $ \norm{\cdot }_1 $.
It is shown by \cite{de1967radial} that the Lipschitz constant is $ K = 2 $ with respect to the $L_1$ norm.
Hence convergence is guaranteed if $ \alpha \in [0, 0.25) $.
\end{proof}

\Cref{cor:conv} is the main result of this paper establishing the convergence of iterative algorithm for regression with constraints. 
Next, we show that the proposed algorithm further exhibits improved numerical properties in several examples, beyond providing mathematical assurance.

\section{Numerical Simulation}\label{sec:NE}

We evaluate the performance of the proposed algorithm with various datasets, parameter values, and loss functions.
First, we underscore that this section is meant to be an exercise in understanding an algorithmic procedure, and the resulting output is supposed to be interpreted as purely technical results. 
The type of constraints that we are going to consider for regression is called fairness constraints in socially sensitive decision making (see \cite{aghaei2019learning}), which is measured in the form of Disparate Impact Discrimination Index as
\begin{equation}\label{eqn:DIDI}
DIDI^r(z) = \sum_{p \in P}\sum_{v \in D_p} \left| \frac{1}{n} \sum_{i=1}^{n} z_i - \frac{1}{|X_{p,v}|} \sum_{i \in X_{p,v}} z_i \right| \le \epsilon.
\end{equation}
Here $ D_p $ is the set of values for the $ p $-th protected feature, such as gender or disability, from the set $ P $, and $ X_{p,v} $ represents the inputs whose $ p $-th feature has value $ v $.
Roughly speaking, it represents the difference between the mean of the output and the mean conditioned by the protected feature, and 
the higher DIDI, the more the dataset suffers from disparate impact. 
The constraint on the DIDI value, $ \epsilon $, is taken to be a fraction ($ 0.2 $) of the DIDI value for the training set.

Three different datasets are considered for this regression problem with fairness constraints:
\begin{itemize}
	\item \texttt{student} dataset ($ n = 649 $ points, $ d = 33 $ attributes) for Portuguese class from the UCI repository which has been used to predict secondary school student performance in \cite{cortez2008using}.
	We are going to protect the feature, \textit{sex}, while trying to estimate the final grade of each student, \textit{G3}.
	Meanwhile, features like \textit{romantic} interests which will likely have no relation to the output are removed according to \cite{cortez2008using}.
	\item \texttt{crime} dataset also from the UC Irvine Machine Learning repository~\cite{Dua2019} which has $ n = 2,215,\ d = 147 $.
	Since the target variable is \textit{violentPerPop} representing per capita violent crimes, we want to impose fairness constraints w.r.t. the protected feature \textit{race}.
	Features that have a lot of NaN values are removed along with others, which are directly dependent on the targets and act as outputs themselves.
	\item \texttt{blackfriday} dataset which is available online at \cite{bfdata}.
	The original training data can not be utilized with the limited amount of computing resources available since it is very large ($ n \approx 550,000,\ d = 12 $).
	So, we select a sample of data from the start with size, $ n = 50,000 $.
	Here, the goal is to estimate the amount of money spent, \textit{Purchase}, while ensuring that the predictions are fair with respect to the protected feature, \textit{Gender}.
	A new attribute, \textit{Product\_ID\_Count}, which is the value count of \textit{Product\_ID} is introduced since it represents the number of times a product has been purchased.
	Also, the identity features, \textit{User\_ID, Product\_ID} are removed.
\end{itemize}
All the categorical features in the data are encoded into an integer array using an Ordinal Encoder.
Finally, obtained values are normalized to be between $ 0 $ and $ 1 $ to ensure balanced regression.

\newcolumntype{m}{>$l<$} 
\begin{table}[h!]
	\caption{Parameters}
	\label{tab:alpha}
	\centering
	\begin{tabular}{mmm}
		\toprule
		& \multicolumn{2}{c}{$ \alpha $}\\
		\cmidrule(l){2-3}
		\text{Weight on } \hat{y} \text{ w.r.t } y  & \text{Algorithm \ref{alg:af}, } \alpha_a  & \text{Moving Targets~\cite{detassis2020teaching}, } \alpha_m \\
		\midrule
		\text{Less} & 0.1  & 9 \\
		\text{Equal}  & 0.5  & 1  \\
		\text{More} & 0.9  & 1/9  \\
		\bottomrule
	\end{tabular}
\end{table}
Next the values of parameters $ \alpha, \beta $ are chosen as follows.
Extensive tuning of these terms has already been completed by \cite{detassis2020teaching}, where it is observed that $ \beta = 0.1 $ works well empirically.
This value of $\beta=0.1$ is adopted here as well. 
For the parameter $\alpha$, three different values are chosen, and the corresponding values of the Moving Targets algorithm ~\cite{detassis2020teaching} are calculated from \eqref{eqn:alpha_comp} in Remark \ref{rem:alpha}.
These are listed at Table \ref{tab:alpha} according to the relative weight on $\hat y$ with respect to $y$ in the infeasible adjustment step. 
As $\alpha_a$ is increased, more weight is assigned to $\hat y$ that has been adjusted for the constraint, compared with the original target $y$.
Therefore, there is more emphasis on the satisfaction of the constraint.

For the machine learning model of regression, a gradient boosted tree is chosen as it ensures repeatability.
It also achieves higher accuracy as well as better constraint satisfaction. 
To study the convergence property, the algorithms are executed for the total of $ N_i = 30 $ iterations.
A five-fold cross validation is performed to obtain a reliable estimate of performance as well as the standard deviation.
We utilize a computer with a quad core Intel i7 CPU and Nvidia GK107 GPU with 16 GB RAM.
To solve the optimization problems in the adjustment step of Algorithm \ref{alg:af}, we utilize the IBM software CPLEX~\cite{cplex2009v12} for MSE and MAE.
Additionally, we consider the mean Huber loss (MHL) 
$  L(z, y) = \frac{1}{n} \sum_{k=1}^{n} g(z_k - y_k) $, where 
\begin{equation}\label{eqn:huber}
	g(x) = \begin{cases}
		x^2, & |x| \le M \\
		2M|x| - M^2, & |x| > M
	\end{cases}
\end{equation}
with $M=0.1$, which is implemented by CVXPY~\cite{diamond2016cvxpy}.

\subsection{Results}
\newcolumntype{m}{>$c<$} 
\begin{table*}[h!]
	\caption{Performance after $ N_i = 30 $ iterations; shown as \textit{mean (std)} of 5 folds}
	\label{tab:results}
	\centering
	{\small
	\begin{tabular}{mmm|mmmmmm}
		\toprule
		& & & \multicolumn{2}{c}{\texttt{crime}} & \multicolumn{2}{c}{\texttt{student}} & \multicolumn{2}{c}{\texttt{blackfriday}}\\
		\cmidrule(lr){4-5} \cmidrule(lr){6-7} \cmidrule(lr){8-9}
		\text{Loss} & \alpha_a & & \text{A} & \text{M} & \text{A} & \text{M} & \text{A} & \text{M} \\
		\midrule
		
		\multirow{6}{*}{MSE}
		& 0.1 & R^2 	 & \multicolumn{2}{m}{.550\ (.013)} & \multicolumn{2}{m}{.921\ (.010)} & \multicolumn{2}{m}{.645\ (.002)} \\
		&  & \mathcal{C} & \multicolumn{2}{m}{.262\ (.013)} & \multicolumn{2}{m}{.325\ (.044)} & \multicolumn{2}{m}{.567\ (.016)} \\
		& 0.5 & R^2 	 & \multicolumn{2}{m}{.494\ (.012)} & \multicolumn{2}{m}{.908\ (.012)} & \multicolumn{2}{m}{.620\ (.001)} \\
		&  & \mathcal{C} & \multicolumn{2}{m}{.237\ (.006)} & \multicolumn{2}{m}{.289\ (.027)} & \multicolumn{2}{m}{.511\ (.026)} \\
		& 0.9 & R^2 	 & \multicolumn{2}{m}{.368\ (.004)} & \multicolumn{2}{m}{.881\ (.022)} & \multicolumn{2}{m}{.481\ (.003)} \\
		&  & \mathcal{C} & \multicolumn{2}{m}{.216\ (.004)} & \multicolumn{2}{m}{.241\ (.006)} & \multicolumn{2}{m}{.358\ (.013)} \\
		\midrule
		
		\multirow{6}{*}{MAE}
		& 0.1 & R^2 	 & .520\ (.016) & .534\ (.014) & .891\ (.018) & .888\ (.027) & .645\ (.003) & .647\ (.002) \\
		&  & \mathcal{C} & .260\ (.014) & .280\ (.007) & .331\ (.054) & .318\ (.041) & .582\ (.018) & .577\ (.017) \\
		& 0.5 & R^2 	 & \mathbf{.467}\ (.019) & \mathbf{.342}\ (.085) & .874\ (.019) & .883\ (.029) & \mathbf{.624}\ (.002) & \mathbf{.590}\ (.003)\\
		&  & \mathcal{C} & \mathbf{.239}\ (.013) & \mathbf{.265}\ (.005) & .333\ (.054) & .327\ (.026) & \mathbf{.478}\ (.018) & \mathbf{.577}\ (.028) \\
		& 0.9 & R^2 	 & .383\ (.062) & .359\ (.043) & .799\ (.051) & .785\ (.047) & \mathbf{.502}\ (.002) & \mathbf{.295}\ (.005)\\
		&  & \mathcal{C} & .220\ (.011) & .215\ (.007) & \mathit{.278}\ (.026) & \mathit{.212}\ (.021) & \mathit{.256}\ (.010) & \mathit{.219}\ (.006)\\
		\midrule
		
		\multirow{6}{*}{MHL}
		& 0.1 & R^2 	 & .530\ (.013) & .534\ (.013) & .921\ (.011) & .923\ (.010) & \multicolumn{2}{c}{------} \\
		&  & \mathcal{C} & .272\ (.008) & .276\ (.012) & .326\ (.048) & .311\ (.049) & \multicolumn{2}{c}{------} \\
		& 0.5 & R^2 	 & .493\ (.010) & .489\ (.013) & .907\ (.013) & .900\ (.015) & \mathbf{.620}\ (.001) & \mathbf{.611}\ (.003) \\
		&  & \mathcal{C} & .248\ (.007) & .258\ (.006) & .289\ (.035) & .291\ (.044) & .511\ (.026) & .509\ (.025)\\
		& 0.9 & R^2 	 & \mathbf{.368}\ (.004) & \mathbf{.318}\ (.009) & .882\ (.022) & .860\ (.024) & \multicolumn{2}{c}{------} \\
		&  & \mathcal{C} & \mathit{.217}\ (.002) & \mathit{.207}\ (.004) & .241\ (.006) & .232\ (.010) & \multicolumn{2}{c}{------} \\
		\bottomrule
	\end{tabular}
	}
\end{table*}

\Cref{tab:results} presents the results for varying loss functions, datasets and $ \alpha $ values.
Performance is measured through the regression coefficient $ R^2 $, and the ratio $\mathcal{C}$ of DIDI~\eqref{eqn:DIDI} of the predicted output to training data.
We also compare between our algorithm (denoted by A) and the Moving Targets~\cite{detassis2020teaching} (M) for the corresponding $ \alpha $ values from Table \ref{tab:alpha}.
According to Remark \ref{rem:alpha}, both methods are equivalent for MSE.
For the \texttt{blackfriday} dataset, two cases of $\alpha$ are left out since they could not be solved with the available computing resources.
As discussed above, $\alpha_a$ represents the trade-off between the satisfaction of the constraint and the regression.
This is well reflected in Table \ref{tab:results}: as $\alpha_a$ is increased, $\mathcal{C}$ decreases at the cost of reduced $R^2$.

Next, the \textbf{bold} fonts in \Cref{tab:results} represent the cases for which our algorithm performs better than Moving Targets in a statistically meaningful manner, and the \textit{italic} fonts represent the opposite case.
The statistical importance is assumed to occur when $ |\mu_a - \mu_m| \ge \sigma_a + \sigma_m $, i.e., the difference between the mean figures is greater than the sum of their standard deviations.
It can be observed that our procedure performs better in terms of both $ R^2 $ and $ \mathcal{C} $ in more cases for both \texttt{crime} and \texttt{blackfriday} datasets.
For the \texttt{student} dataset, which is the smallest one ($ n = 649 $), the results are mostly comparable. 


\begin{figure*}[h!]
	\centerline{
		\subfigure[$ R^2 $ for $ \alpha_a = 0.1 $, using MAE]{
			\includegraphics[width=0.33\linewidth]{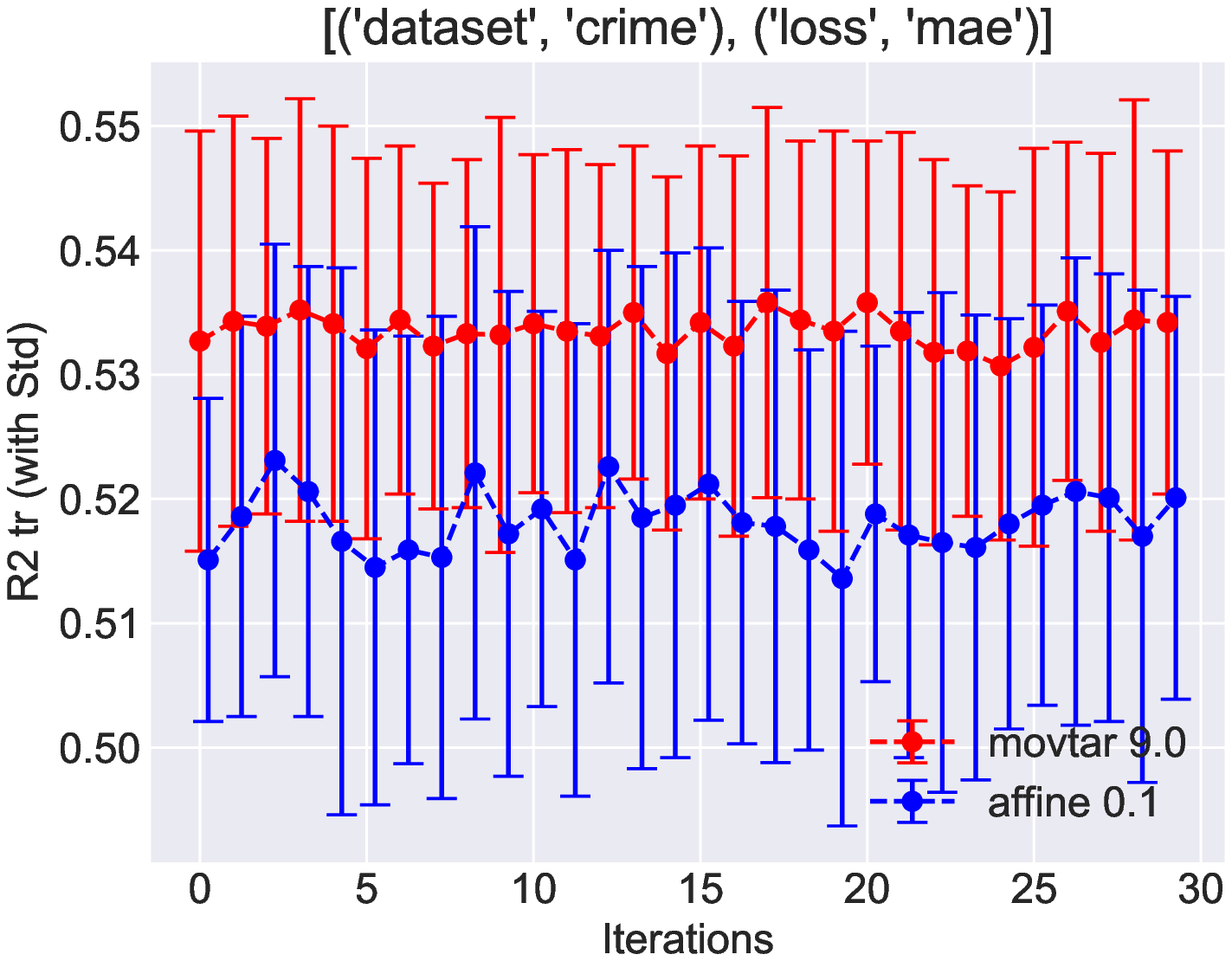}
		}
		\hfill
		\subfigure[$ R^2 $ for $ \alpha_a = 0.5 $, using MAE]{
			\includegraphics[width=0.33\linewidth]{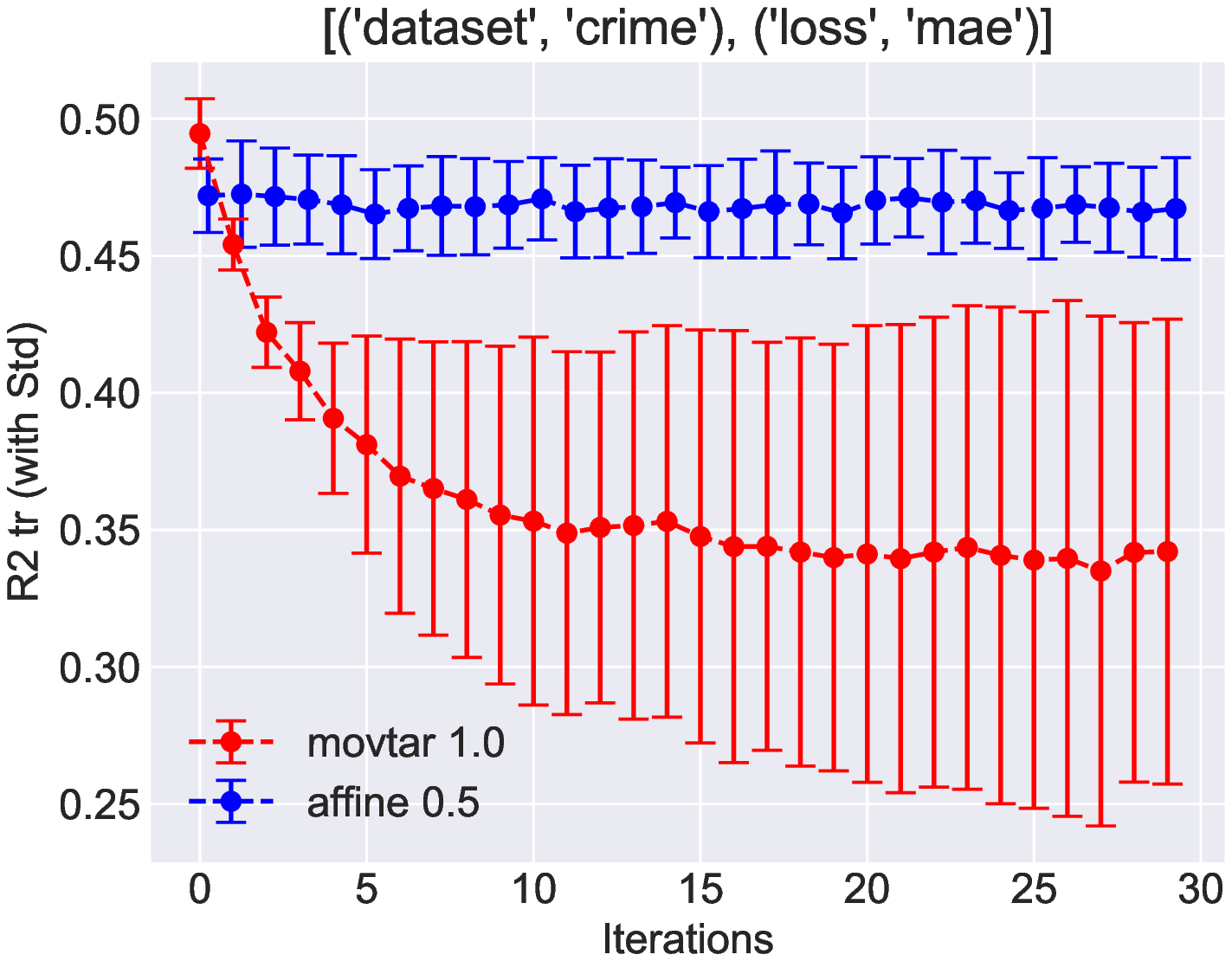}
			\label{fig:2}
		}
		\hfill
		\subfigure[$ R^2 $ for $ \alpha_a = 0.9 $, using MHL]{
			\includegraphics[width=0.33\linewidth]{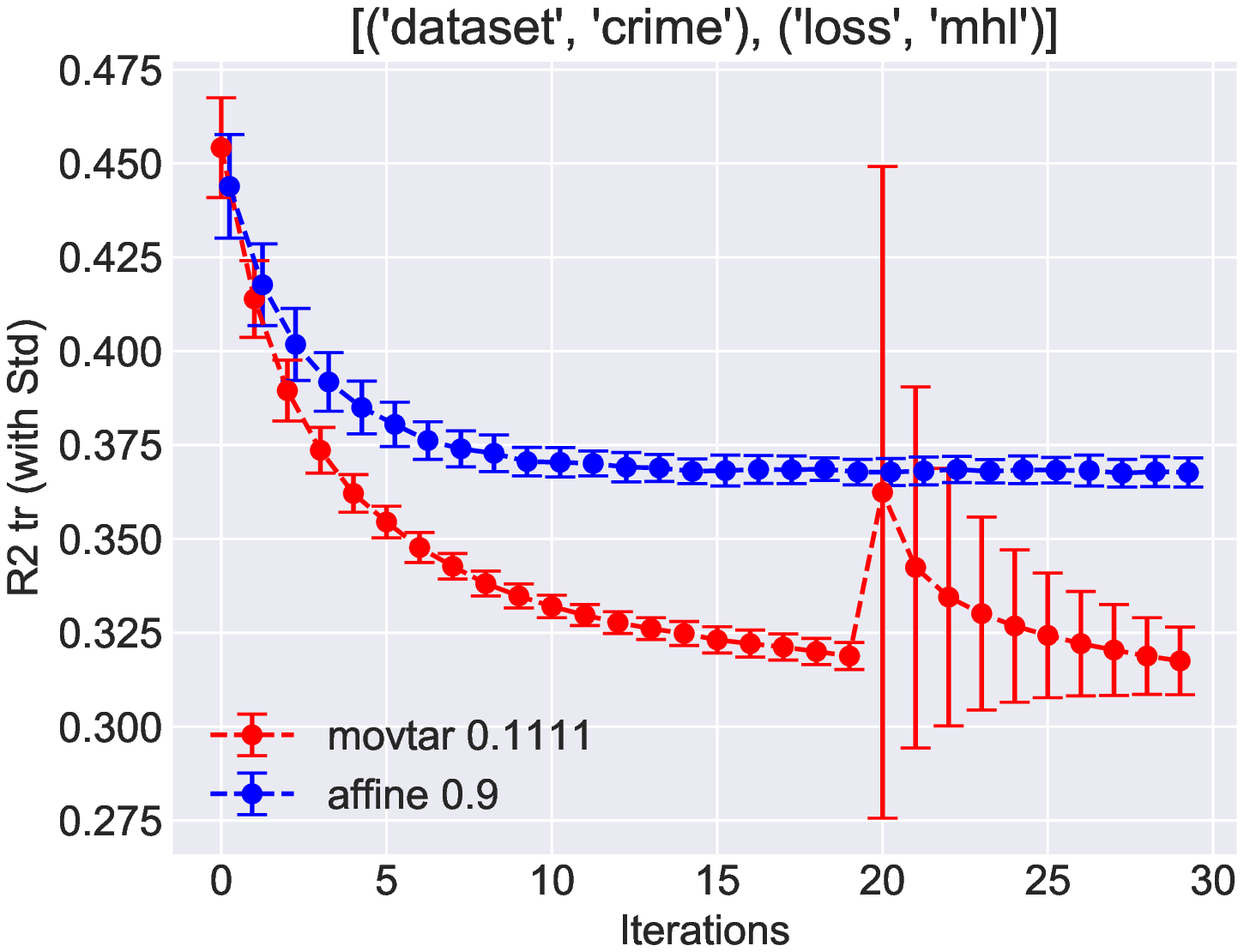}
		}
	}
	\centerline{
		\subfigure[$ \mathcal{C} $ for $ \alpha_a = 0.1 $, using MAE]{
			\includegraphics[width=0.33\linewidth]{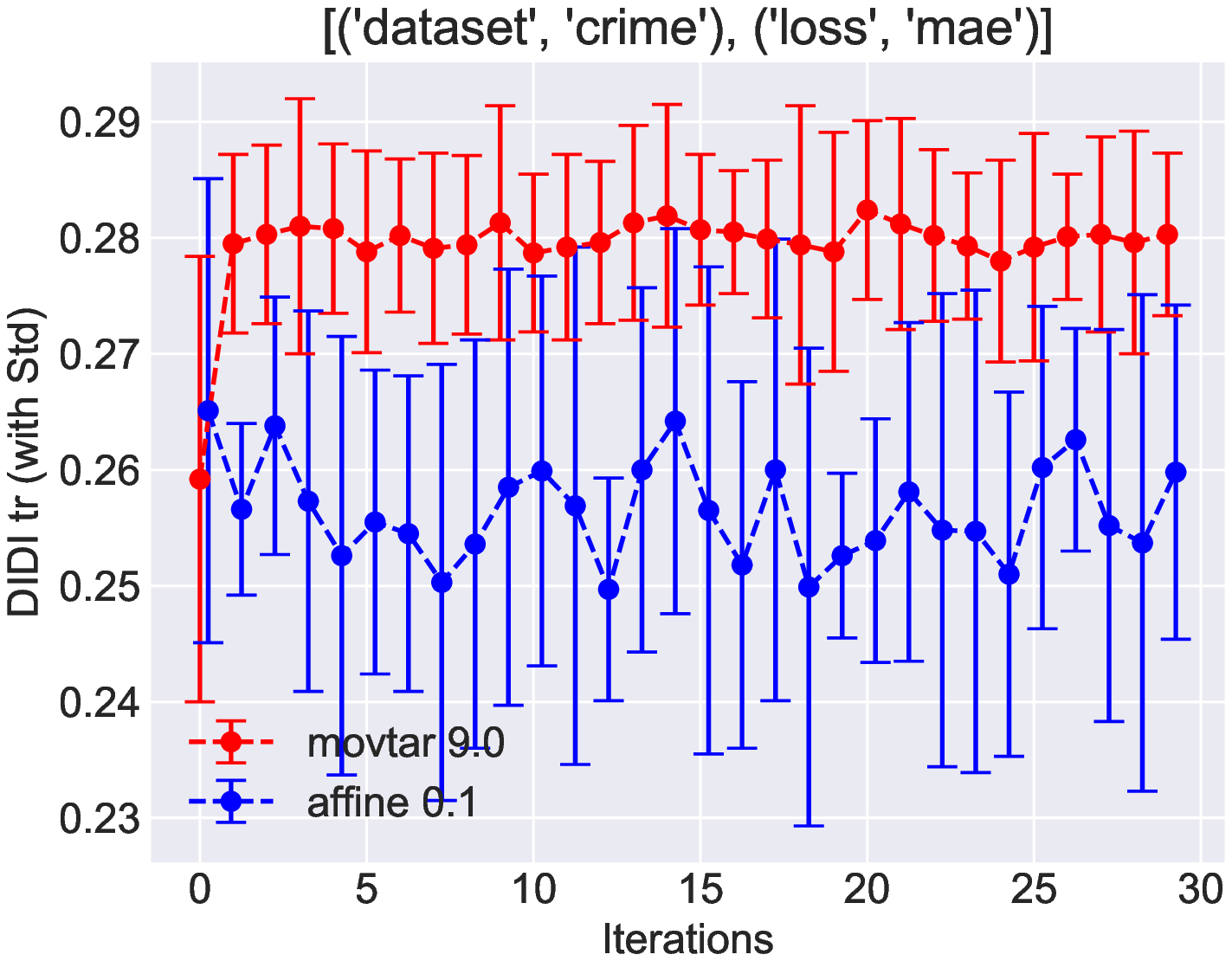}
		}
		\hfill
		\subfigure[$ \mathcal{C} $ for $ \alpha_a = 0.5 $, using MAE]{
			\includegraphics[width=0.33\linewidth]{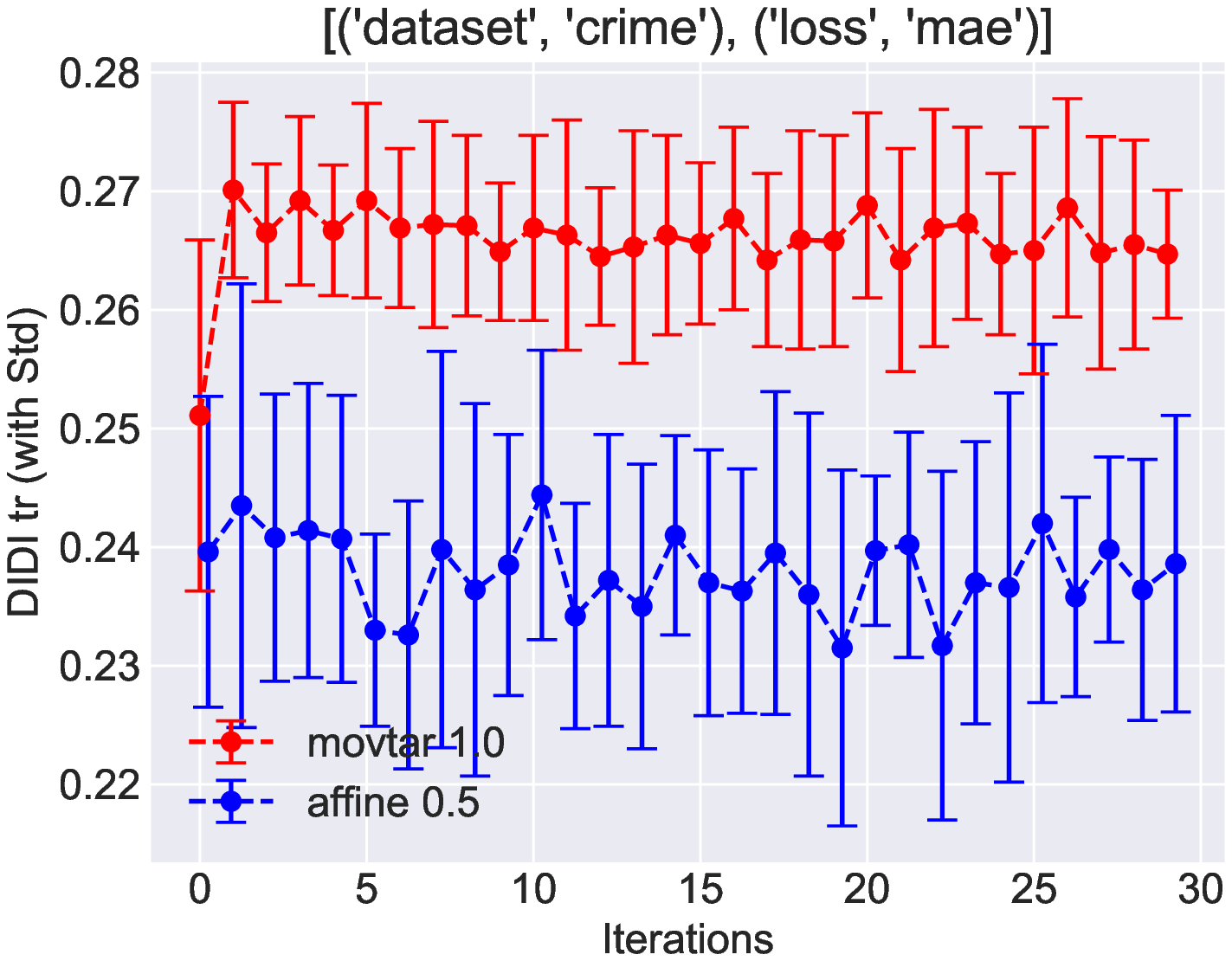}
		}
		\hfill
		\subfigure[$ \mathcal{C} $ for $ \alpha_a = 0.9 $, using MHL]{
			\includegraphics[width=0.33\linewidth]{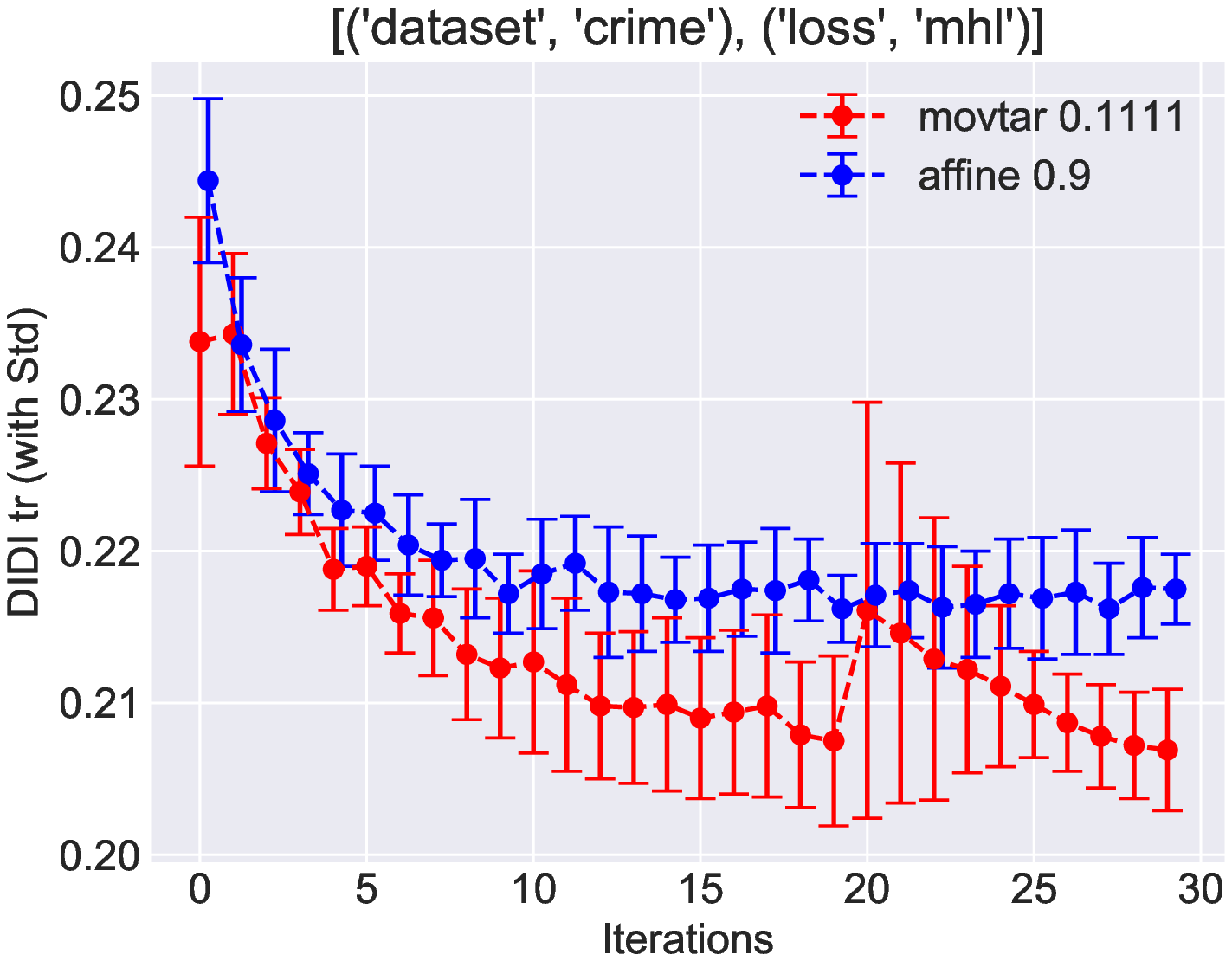}
		}
	}
	\caption{Comparison of our algorithm (blue) vs Moving Targets (red) for \texttt{crime} dataset; error bars represent standard deviation}
	\label{fig:crime}
\end{figure*}

Beyond the regression results summarized by \Cref{tab:results}, the advantages of the proposed approach are well illustrated by investigating the learning process.
Figures \ref{fig:crime} and \ref{fig:blackfriday} presents the evolution of $R^2$ and $\mathcal{C}$ over iterations for \texttt{crime} and \texttt{blackfriday} data, respectively.
When $ \alpha_a $ is small ($ 0.1 $), both the algorithms yield very similar results as seen in Figures \ref{fig:crime}.(a) and (d).

However, once $ \alpha_a $ is increased to $ 0.5 $ and $ 0.9 $ for more emphasis on constraint satisfaction, the proposed Algorithm \ref{alg:af} performs noticeably better.
As shown in Figures \ref{fig:crime}.(b) and (e), and also in Figures \ref{fig:blackfriday}.(a) and (c), the proposed approach yields a greater $R^2$ with a lower $\mathcal{C}$.
Next, in Figures \ref{fig:crime}.(c) and (f), and in Figures \ref{fig:blackfriday}.(b) and (d), it exhibits greater values of $R^2$ while being comparable in terms of the constraint satisfaction.
More importantly, the proposed approach displays more uniform performances over five-fold validation as the standard deviation is much lower, for example as illustrated by Figures \ref{fig:crime}.(b), \ref{fig:crime}.(c), and \ref{fig:blackfriday}.(c).
This suggests that the presented proof of convergence is in fact beneficial in numerical implementations as well, and the improved numerical properties in iterations may be more important for the scalability of regression and the complexity of constraints. 


\begin{figure*}[h!]
	\centerline{
		\subfigure[$ R^2 $ for $ \alpha_a = 0.5 $]{
			\includegraphics[width=0.33\linewidth]{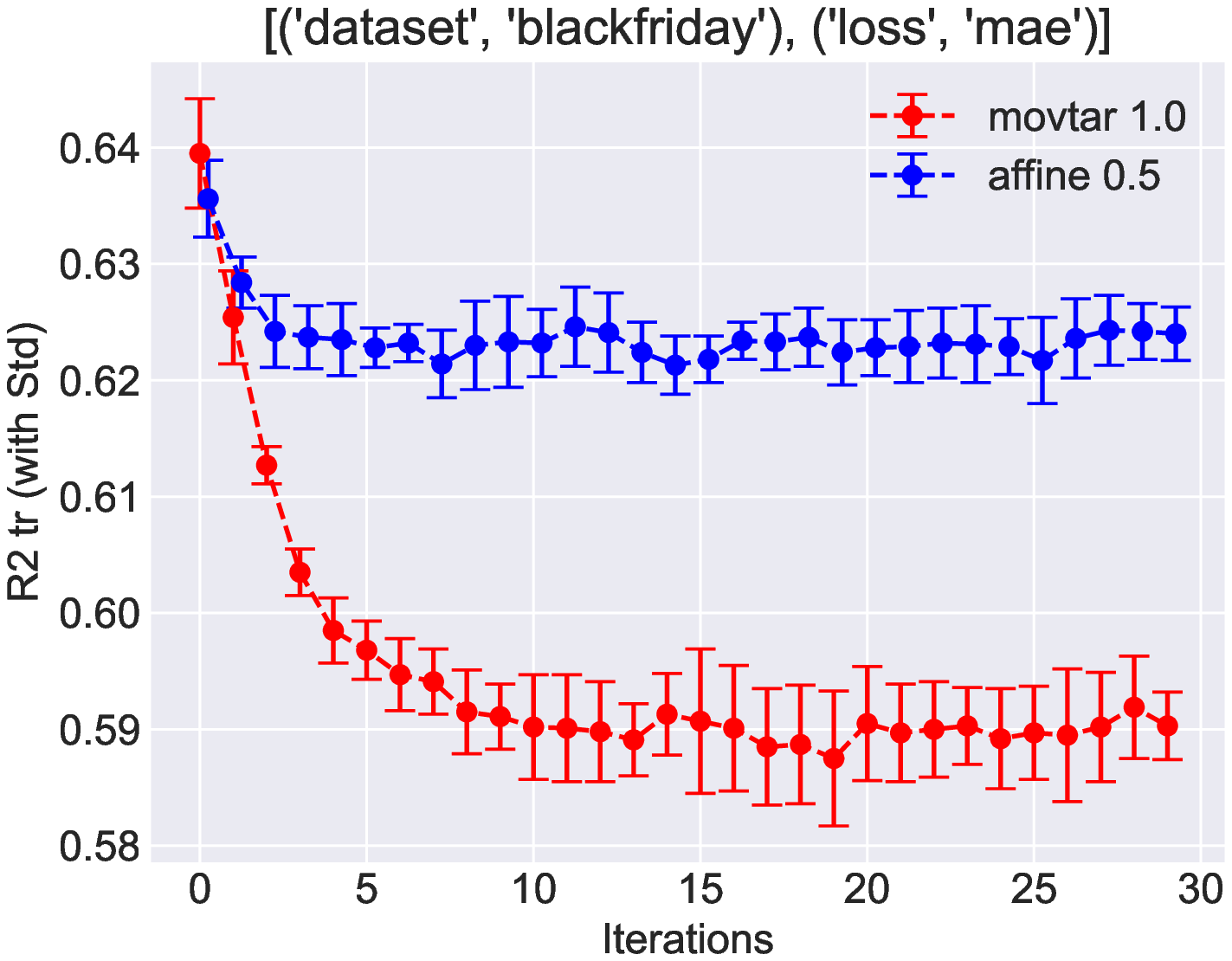}
		}
		\subfigure[$ R^2 $ for $ \alpha_a = 0.9 $]{
			\includegraphics[width=0.33\linewidth]{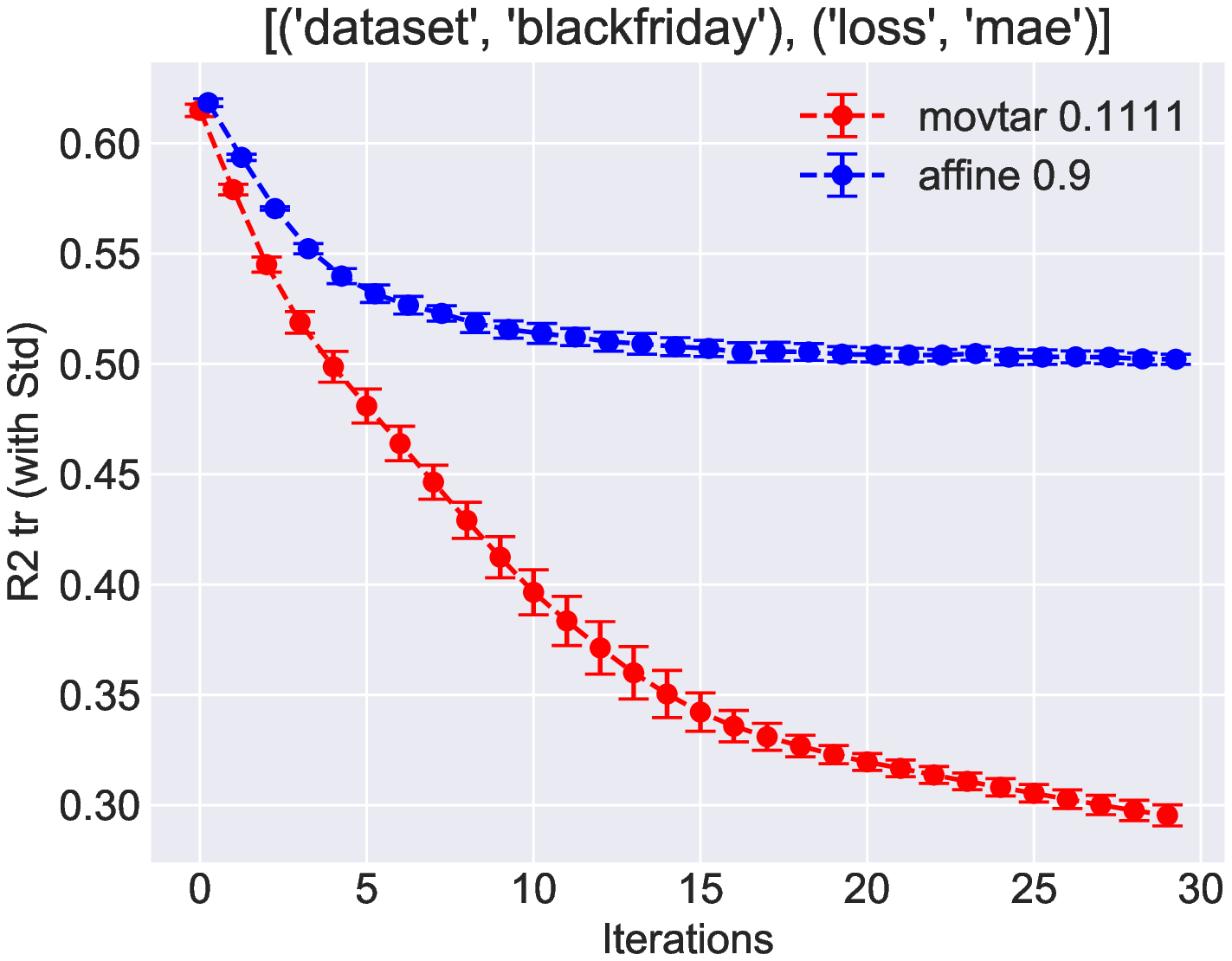}
		}
	}
	\centerline{
		\subfigure[$ \mathcal{C} $ for $ \alpha_a = 0.5 $]{
			\includegraphics[width=0.33\linewidth]{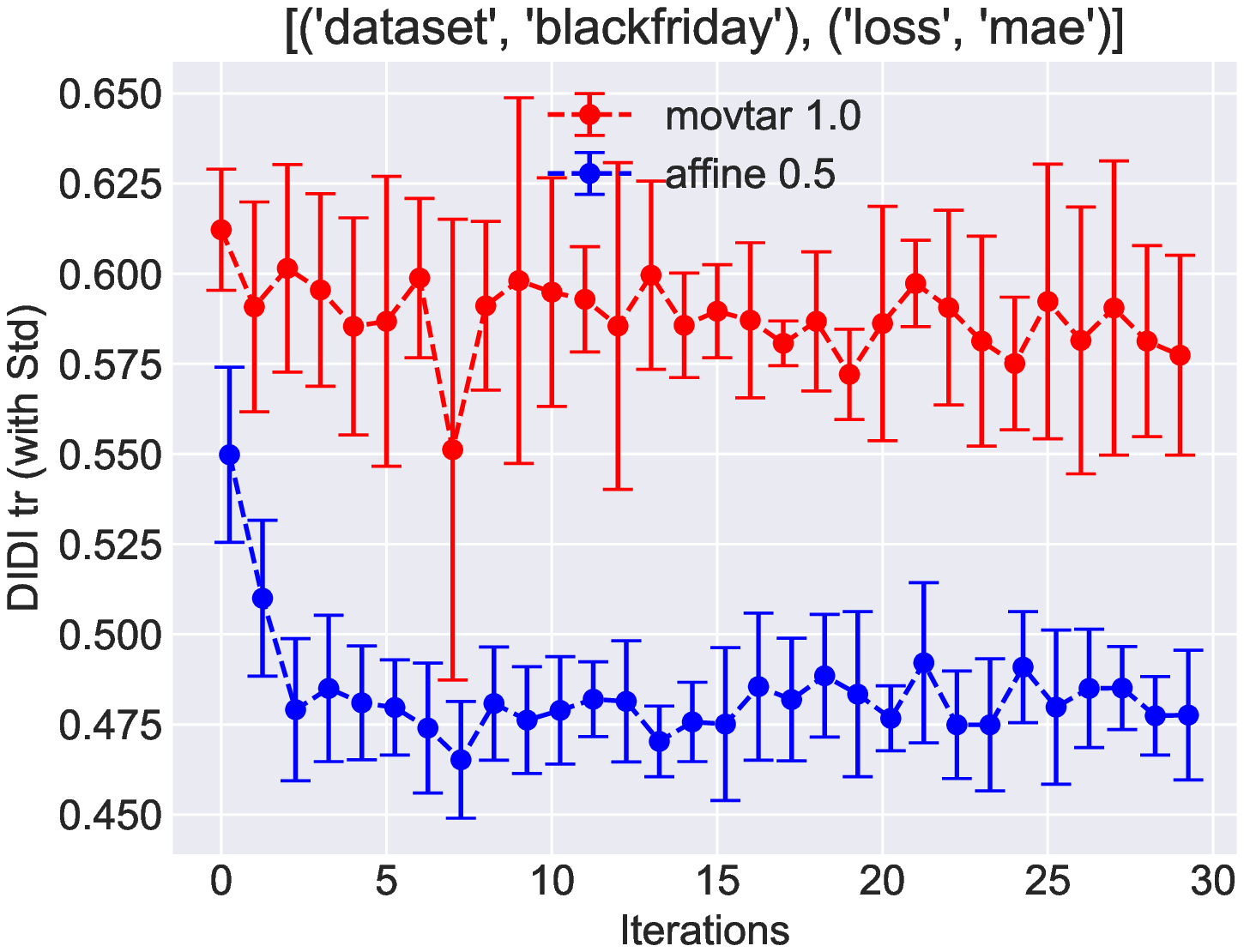}
		}
		\subfigure[$ \mathcal{C} $ for $ \alpha_a = 0.9 $]{
			\includegraphics[width=0.33\linewidth]{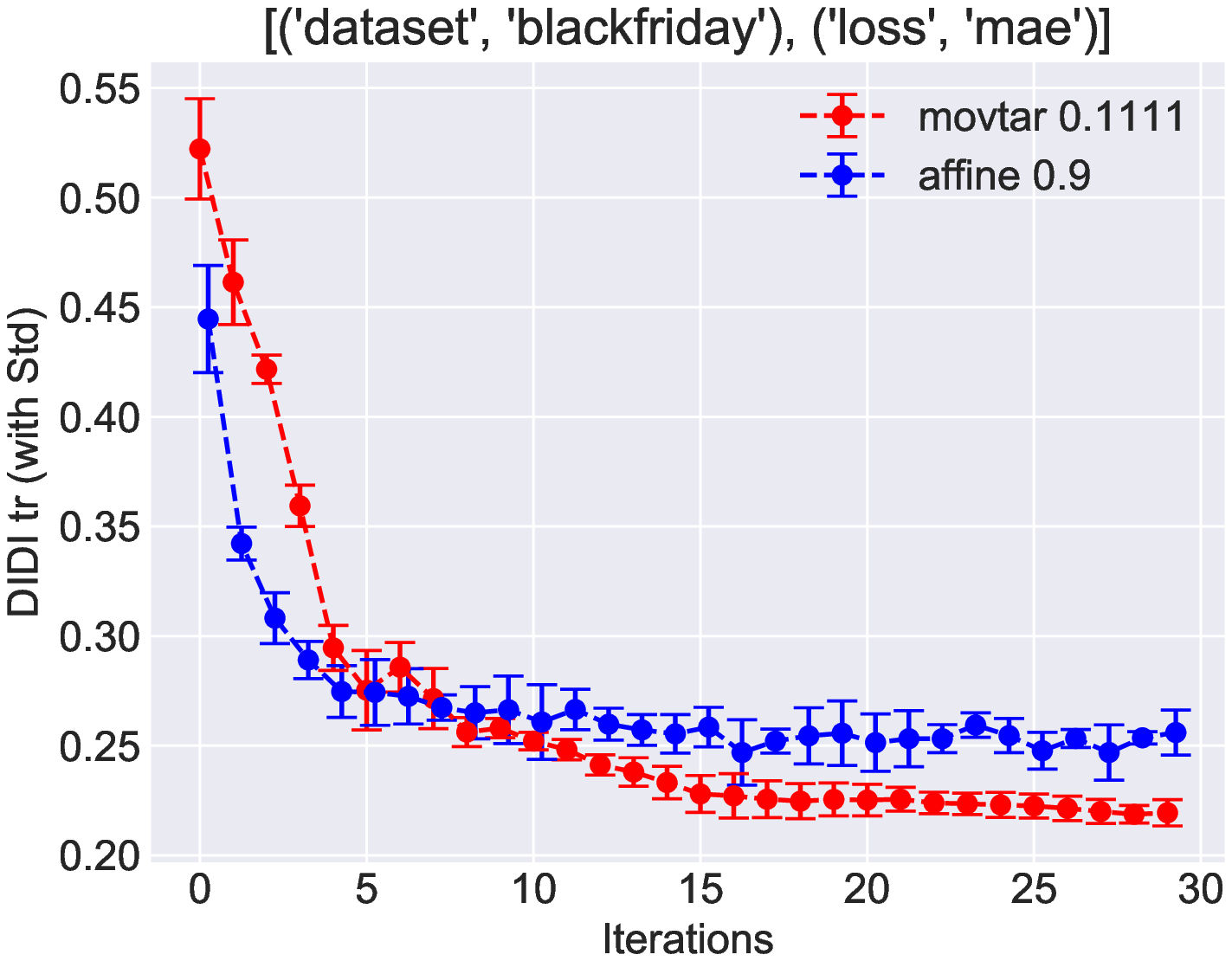}
		}
	}
	\caption{Comparison of our algorithm (blue) vs Moving Targets (red) for \texttt{blackfriday} dataset using MAE; error bars represent standard deviation}
	\label{fig:blackfriday}
\end{figure*}

\section{Conclusions}\label{sec:conclude}

We have proposed an iterative algorithm for regression  with constraints, composed of feasible/infeasible adjustments and training.
A convergence guarantee is also provided with an affine extension function in the infeasible adjustment step.
Furthermore, this result is specialized in the form of parameter constraints for selected loss functions.
Later, the results of numerical experiments are presented with varying datasets and parameter values.
The proposed convergence proof in supervised learning with constraints is the unique contribution, 
and it is further shown that the performances in all of the aspects of regression, constraint satisfaction and training stability are improved over the existing techniques. 
For future direction, we aim to study a convergence guarantee in more generic, non-Lipschitz conditions, and even for classification setups.

\bibliographystyle{ieeetran}
\bibliography{root.bib}

\begin{thebibliography}{10}
\providecommand{\url}[1]{#1}
\csname url@rmstyle\endcsname
\providecommand{\newblock}{\relax}
\providecommand{\bibinfo}[2]{#2}
\providecommand\BIBentrySTDinterwordspacing{\spaceskip=0pt\relax}
\providecommand\BIBentryALTinterwordstretchfactor{4}
\providecommand\BIBentryALTinterwordspacing{\spaceskip=\fontdimen2\font plus
\BIBentryALTinterwordstretchfactor\fontdimen3\font minus
  \fontdimen4\font\relax}
\providecommand\BIBforeignlanguage[2]{{%
\expandafter\ifx\csname l@#1\endcsname\relax
\typeout{** WARNING: IEEEtran.bst: No hyphenation pattern has been}%
\typeout{** loaded for the language `#1'. Using the pattern for}%
\typeout{** the default language instead.}%
\else
\language=\csname l@#1\endcsname
\fi
#2}}

\bibitem{yang2020learning}
S.~Yang, X.~He, and B.~Zhu, ``Learning physical constraints with neural
  projections,'' \emph{arXiv preprint arXiv:2006.12745}, 2020.

\bibitem{berk2017convex}
R.~Berk, H.~Heidari, S.~Jabbari, M.~Joseph, M.~Kearns, J.~Morgenstern, S.~Neel,
  and A.~Roth, ``A convex framework for fair regression,'' \emph{arXiv preprint
  arXiv:1706.02409}, 2017.

\bibitem{borghesi2020improving}
A.~Borghesi, F.~Baldo, and M.~Milano, ``Improving deep learning models via
  constraint-based domain knowledge: a brief survey,'' \emph{arXiv preprint
  arXiv:2005.10691}, 2020.

\bibitem{mehta2018towards}
S.~V. Mehta, J.~Y. Lee, and J.~Carbonell, ``Towards semi-supervised learning
  for deep semantic role labeling,'' \emph{arXiv preprint arXiv:1808.09543},
  2018.

\bibitem{diligenti2017integrating}
M.~Diligenti, S.~Roychowdhury, and M.~Gori, ``Integrating prior knowledge into
  deep learning,'' in \emph{2017 16th IEEE International Conference on Machine
  Learning and Applications (ICMLA)}.\hskip 1em plus 0.5em minus 0.4em\relax
  IEEE, 2017, pp. 920--923.

\bibitem{marquez2017imposing}
P.~M{\'a}rquez-Neila, M.~Salzmann, and P.~Fua, ``Imposing hard constraints on
  deep networks: Promises and limitations,'' \emph{arXiv preprint
  arXiv:1706.02025}, 2017.

\bibitem{nandwani2019primal}
Y.~Nandwani, A.~Pathak, P.~Singla, \emph{et~al.}, ``A primal dual formulation
  for deep learning with constraints,'' 2019.

\bibitem{detassis2020teaching}
F.~Detassis, M.~Lombardi, and M.~Milano, ``Teaching the old dog new tricks:
  Supervised learning with constraints,'' \emph{arXiv preprint
  arXiv:2002.10766}, 2020.

\bibitem{cheney1959proximity}
W.~Cheney and A.~A. Goldstein, ``Proximity maps for convex sets,''
  \emph{Proceedings of the American Mathematical Society}, vol.~10, no.~3, pp.
  448--450, 1959.

\bibitem{boyd2003alternating}
S.~Boyd and J.~Dattorro, ``Alternating projections,'' \emph{EE392o, Stanford
  University}, 2003.

\bibitem{bauschke1994dykstra}
H.~H. Bauschke and J.~M. Borwein, ``Dykstra's alternating projection algorithm
  for two sets,'' \emph{Journal of Approximation Theory}, vol.~79, no.~3, pp.
  418--443, 1994.

\bibitem{ciesielski2007stefan}
K.~Ciesielski \emph{et~al.}, ``On {Stefan Banach} and some of his results,''
  \emph{Banach Journal of Mathematical Analysis}, vol.~1, no.~1, pp. 1--10,
  2007.

\bibitem{balestro2019convex}
V.~Balestro, H.~Martini, and R.~Teixeira, ``Convex analysis in normed spaces
  and metric projections onto convex bodies,'' \emph{arXiv preprint
  arXiv:1908.08742}, 2019.

\bibitem{de1967radial}
D.~G. De~Figueiredo and L.~Karlovitz, ``On the radial projection in normed
  spaces,'' in \emph{Djairo G. de Figueiredo-Selected Papers}.\hskip 1em plus
  0.5em minus 0.4em\relax Springer, 1967, pp. 11--15.

\bibitem{aghaei2019learning}
S.~Aghaei, M.~J. Azizi, and P.~Vayanos, ``Learning optimal and fair decision
  trees for non-discriminative decision-making,'' in \emph{Proceedings of the
  AAAI Conference on Artificial Intelligence}, vol.~33, no.~01, 2019, pp.
  1418--1426.

\bibitem{cortez2008using}
P.~Cortez and A.~M.~G. Silva, ``Using data mining to predict secondary school
  student performance,'' 2008.

\bibitem{Dua2019}
\BIBentryALTinterwordspacing
D.~Dua and C.~Graff, ``{UCI} machine learning repository,'' 2017. [Online].
  Available: \url{http://archive.ics.uci.edu/ml}
\BIBentrySTDinterwordspacing

\bibitem{bfdata}
\BIBentryALTinterwordspacing
Black friday dataset. [Online]. Available:
  \url{https://www.kaggle.com/sdolezel/black-friday}
\BIBentrySTDinterwordspacing

\bibitem{cplex2009v12}
{IBM ILOG CPLEX}, ``V12. 1: User’s manual for {CPLEX},'' \emph{International
  Business Machines Corporation}, vol.~46, no.~53, p. 157, 2009.

\bibitem{diamond2016cvxpy}
S.~Diamond and S.~Boyd, ``{CVXPY}: {A} {P}ython-embedded modeling language for
  convex optimization,'' \emph{Journal of Machine Learning Research}, vol.~17,
  no.~83, pp. 1--5, 2016.

\end{thebibliography}

\end{document}